\theoremstyle:=definition,remark,plain\do{%
        \expandafter\g@addto@macro\csname th@\theoremstyle\endcsname{%
            \addtolength\thm@preskip\parskip
            }%
        }
\newtheorem{thm}{Theorem}[section]
\newtheorem{prop}[thm]{Proposition}
\newtheorem{lem}[thm]{Lemma}
\newtheorem{cor}[thm]{Corollary}
\newtheorem{defn}[thm]{Definition}
\newtheorem{examp}[thm]{Example}
\newcommand{\R}{\mathbb{R}}
\newcommand{\N}{\mathbb{N}}
\newcommand{\s}{\mathcal{S}}
\newcounter{qcounter}
\DeclareMathOperator{\trace}{trace}
\DeclareMathOperator{\conv}{conv}
\title{Spectrahedra and Convex Hulls of Rank-One Elements}
\author{Martin Ames Harrison}
\date{} 
\begin{document}
\clearpage\maketitle
\thispagestyle{plain}
\begin{abstract}
\noindent The Helton-Nie Conjecture (HNC) is the proposition that every convex semialgebraic set is a spectrahedral shadow. Here we prove that HNC is equivalent to another proposition related to quadratically constrained quadratic programming. Namely, that the convex hull of the rank-one elements of any spectrahedron is a spectrahedral shadow. In the case of compact convex semialgebraic sets, the spectrahedra may be taken to be compact. We illustrate the relationship between spetrahedra and these convex subsets with examples. 
\end{abstract}
\section{Introduction}
Helton and Nie conjectured in \cite{HeltSuff} that every convex semialgebraic set is a spectrahedral shadow. Recent work in convex algebraic geometry suggests that the conjecture is true. For example, Lasserre showed an approximation theorem for compact basic semialgebraic sets with nonempty interior (see \cite{LassPolar}), as well as exact semidefinite representation for sets whose defining polynomials satisfy a certain property. Scheiderer, in \cite{ClausHNC}, showed that HNC holds for convex semialgebraic subsets of $\R^2$. Others have obtained results on spectrahedral shadows themselves. We will use, for instance, the semidefinite representability (i.e. realizability as a spectrahedral shadow) of convex hulls of finite unions of spectrahedral shadows demonstrated in \cite{NetzerSinn}. Other results pertain to the boundaries and extreme points of spectrahedral shadows (e.g. \cite{NetzPlaumSchw},\cite{Rainer2}).  

In \cite{NieQC}, Nie explored the semidefinite representability of quadratically parametrized images of quadratically constrained sets using results from the theory of moments. Nie showed that when only one quadratic constraint is imposed, or when two quadratic constraints of a certain type are imposed, it follows that such an image must be a spectrahedral shadow. Even when many constraints are allowed, this may appear to be a highly specialized question. In fact, we will show that it is equivalent to HNC, and can be expressed in terms of natural subsets of spectrahedra. 

We end this section with some definitions. The set of $N\times N$ matrices over $\R$ is denoted by $M_N(\R)$. The set $\s_N\subseteq M_N(\R)$ consists of the symmetric matrices. If $A\in\s_N$ has only nonnegative eigenvalues, then we write $A \in \s_N^+$ and $A \succeq 0$, and we call $A$ \emph{positive semidefinite} (PSD). If $A \in \s_N^+$ has only positive eigenvalues, then we call $A$ \emph{positive definite} and write $A \succ 0$. Positive definite matrices comprise the interior of $\s_N^+$, which is denoted by $\s_N^{++}$. Equivalently, $A$ is positive definite if and only if $x^TAx >0$ for all $x \neq 0$. 

See \cite{Barvinok} for an in-depth exposition on the PSD cone. 

\begin{defn}
A {\bf spectrahedron} is a set of the form $\s_N^+ \cap V$, where $V$ is an affine subspace of $\s_N$. 
\end{defn}

\begin{defn}
The terms {\bf projected spectrahedron, spectrahedral shadow,} and {\bf semidefinitely representable set} all denote the image under a linear transformation of a spectrahedron.
\end{defn}

For convenience, we invent a term for the convex hulls of interest.

\begin{defn}
A {\bf pseudospectrahedron} is the convex hull of all rank-1 elements of a spectrahedron.  
\end{defn}

And we give a name to the proposition to be proven equivalent to HNC. 

\begin{prop}{\bf (\emph{Pseudospectrahedron Conjecture -- PSC})} Every pseudospectrahedron is a spectrahedral shadow.
\end{prop}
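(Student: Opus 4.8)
The plan is to attack the statement directly through the moment machinery for quadratically constrained problems, exactly the setting of \cite{NieQC}. First I would unwind the definition: if the spectrahedron is $\s_N^+\cap V$ with $V=\{X\in\s_N:\trace(A_iX)=b_i,\ i=1,\dots,m\}$, then a rank-$1$ element is a matrix $vv^T$ (automatically in $\s_N^+$) subject to the quadratic conditions $v^TA_iv=b_i$. Hence the pseudospectrahedron is
\[
P=\conv\{\,vv^T : v^TA_iv=b_i,\ i=1,\dots,m\,\},
\]
the convex hull of the quadratically parametrized image of a real quadric variety. A first easy observation is that $P\subseteq \s_N^+\cap V$, since averaging the constraints $v_k^TA_iv_k=b_i$ against convex weights gives $\trace(A_iX)=b_i$ for any $X=\sum_k\lambda_kv_kv_k^T$; the whole difficulty is that this containment is generally strict, because a generic positive semidefinite $X$ in the slice need not admit a rank-one decomposition whose summands individually satisfy the quadratic constraints.

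The second step is to realize $P$ as a limit of spectrahedral shadows via second-moment bodies. By Tchakaloff/Carath\'eodory, every point of $P$ is $X=\int vv^T\,d\mu$ for a finitely supported probability measure $\mu$ on the quadric variety, so $P$ is exactly the set of degree-$2$ moment matrices of such measures. For each truncation degree $d\ge 1$ I would form the standard moment relaxation: a truncated sequence $y$ with moment matrix $M_d(y)\succeq 0$, normalization $y_0=1$, and the localizing relations that force $M_d(y)$ to annihilate the ideal generated by the forms $v^TA_iv-b_i$. The projection of this spectrahedron onto the degree-$2$ coordinates, the block $X=(y_{e_j+e_k})$, is a spectrahedral shadow $P_d$ by construction, and one has $P\subseteq\cdots\subseteq P_{d+1}\subseteq P_d$, with $P_1$ recovering the whole ambient slice $\s_N^+\cap V$. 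If the slice is compact one may adjoin a ball constraint $\|v\|^2\le R$ as an extra localizer to keep each $P_d$ compact, matching the compactness refinement promised for bounded sets.

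The crux, and the step I expect to be the genuine obstacle, is proving that the hierarchy is \emph{exact at a finite level}, i.e. that $P_d=P$ for some finite $d$. Exactness would follow from a flat-extension / rank-stabilization argument in the sense of Curto--Fialkow: one wants every feasible truncated sequence for the relaxation to extend to a bona fide measure on the quadric variety, equivalently that every positive semidefinite linear functional annihilating the ideal and respecting the degree bound is a conic combination of point evaluations. Dually, one wants a Positivstellensatz certifying that every linear form nonnegative on the rank-one constraint set is, modulo the ideal, a sum of squares of bounded degree. This is precisely where the special cases of \cite{NieQC} succeed (one quadratic constraint, or two of a favorable type, where flatness or an explicit degree-two certificate can be forced) and where the general case resists: for arbitrary quadratic constraints no uniform degree bound is known, and the truncated cone of sums of squares can fail to be closed, so the projections $P_d$ may only approach $P$ without ever equaling it. I would therefore expect any honest attempt at the fully general statement to stall exactly here; indeed, since the paper's main result identifies this proposition with the Helton--Nie Conjecture, one should not expect an unconditional finite-level proof, and the realistic target is the compact special cases or a reduction feeding into that equivalence rather than a self-contained resolution.
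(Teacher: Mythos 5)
You have not proved the statement, and you say so yourself; the essential point for this comparison is that the paper does not prove it either. Despite being typeset as a proposition, PSC is a named \emph{conjecture}: the paper's entire content concerning it consists of the two implications Proposition \ref{HNCPNC} (HNC implies PSC, via Carath\'eodory's theorem and the projection theorem for semialgebraic sets) and Proposition \ref{PSCHNC} (PSC implies HNC, via the reduction to quadratics of Lemma \ref{ReduceQuadratic}, the Netzer--Sinn Lemma \ref{NetzerS}, and an explicit encoding of any quadratically defined set as the projection of a pseudospectrahedron), together with the compact variant Proposition \ref{compactPSC}. So the fair verdict on your attempt is that it contains a genuine gap --- precisely the one you flag --- and that no argument closing that gap appears in, or is claimed by, the paper.

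Concretely: your setup is sound (the identification of the pseudospectrahedron as $P=\conv\{vv^T : v^TA_iv=b_i\}$, the containment $P\subseteq\s_N^+\cap V$, and the decreasing chain of shadows $P\subseteq\cdots\subseteq P_{d+1}\subseteq P_d$ with $P_1$ the ambient slice), but it cannot be completed, for three reasons. First, exactness $P_d=P$ at a finite level would require a uniform degree bound on sum-of-squares certificates for linear forms nonnegative on the rank-one locus modulo the ideal generated by the $v^TA_iv-b_i$; Curto--Fialkow flatness applies to individual flat truncated sequences and yields no such uniform bound over the whole feasible region of the relaxation. Second, even under Archimedean hypotheses where $\bigcap_d P_d$ equals the closure of $P$, an infinite intersection of spectrahedral shadows is not known to be a shadow, and $P$ itself need not be closed, so the hierarchy can at best capture $\overline{P}$. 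Third, and decisively: by Proposition \ref{PSCHNC}, a finite-level exactness proof valid for all spectrahedra would prove HNC outright, and subsequent work of Scheiderer exhibited convex semialgebraic sets that are not spectrahedral shadows, so by the contrapositive of Proposition \ref{PSCHNC} the statement PSC is in fact false in general --- every attempted proof must fail somewhere, and yours fails exactly at the exactness step you identify. Your closing diagnosis, that the realistic contribution is the equivalence with HNC (and its compact refinement) rather than a self-contained resolution, is correct: that is precisely the program the paper carries out, and your direction HNC $\Rightarrow$ PSC via Carath\'eodory lifting even mirrors the paper's Proposition \ref{HNCPNC}.
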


And, for the sake of clarity and completeness, below is at statement of the Helton-Nie Conjecture.

\begin{prop}{\bf (\emph{Helton-Nie Conjecture -- HNC})} Every convex semialgebraic set is a spectrahedral shadow.
\end{prop}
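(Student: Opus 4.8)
The plan is to attack HNC directly through the moment--sums-of-squares (Lasserre) hierarchy, aiming not merely for approximation but for an \emph{exact}, finite-level semidefinite representation. Let $C\subseteq\R^n$ be convex and semialgebraic. First I would carry out a sequence of reductions using the known closure properties of spectrahedral shadows: they are closed under linear images (by definition), under finite intersections, and by \cite{NetzerSinn} under convex hulls of finite unions. Combining these with a quantifier-elimination normal form for the semialgebraic boundary of $C$, and splitting off the recession cone by homogenization to dispose of unboundedness, I would reduce the problem to the case where $C$ is a compact convex basic closed set, say the closure of $\{x: g_1(x)>0,\dots,g_m(x)>0\}$. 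This reduction is itself not entirely trivial, but it is secondary to the main point.

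For the compact basic case I would introduce the truncated moment relaxations. For each degree $k$ set
\[
Q_k=\Bigl\{x\in\R^n:\ \exists\,y\ \text{with moment matrix }M_k(y)\succeq0,\ \text{localizing matrices }M_k(g_j\,y)\succeq0,\ y_{e_i}=x_i\Bigr\},
\]
which is, by construction, the projection onto the first-order coordinates of a spectrahedron cut out by the PSD conditions on $M_k(y)$ and the $M_k(g_j\,y)$; hence each $Q_k$ is a spectrahedral shadow. Convexity of $C$ yields the nested inclusions $C\subseteq\cdots\subseteq Q_{k+1}\subseteq Q_k$, and the convergence theory of the hierarchy (cf. \cite{LassPolar}) gives $\bigcap_k Q_k=C$. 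Everything then collapses to a single exactness statement: that $C=Q_k$ for some \emph{finite} $k$.

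This exactness statement is where the entire difficulty concentrates, and it is the step I expect to be the genuine obstacle. By Lagrangian duality, $C=Q_k$ is equivalent to a uniform bounded-degree Positivstellensatz: every linear polynomial $\ell$ nonnegative on $C$ must admit a representation $\ell=\sigma_0+\sum_j\sigma_j\,g_j$ with each $\sigma_j$ a sum of squares of degree at most $k$. Proving such a uniform certificate amounts to showing that the relevant truncated quadratic module is closed and exhausts the cone of nonnegative linear forms in bounded degree. The plan would be to lean hard on convexity here --- extracting the certificate from a representation of the support function of $C$, or from the geometry of its algebraic boundary by a determinantal/hyperbolicity argument in the spirit of the generalized Lax conjecture, with Scheiderer's planar theorem \cite{ClausHNC} as an inductive anchor. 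The hard part is that none of these routes is known to supply a \emph{degree bound valid for all convex semialgebraic $C$}: the quadratic module can fail to be closed, and the hierarchy can fail to converge in finitely many steps, so that the limit $\bigcap_k Q_k$ is attained only in the closure and never at a fixed level. Controlling or circumventing this failure --- producing a representation that does not rely on finite convergence of the hierarchy --- is the crux on which any proof of HNC stands or falls.
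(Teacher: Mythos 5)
The first thing to understand is that the paper does not prove this statement, and does not claim to: although typeset as a proposition, HNC is a \emph{conjecture}, stated only so that it can be compared with PSC. The paper's actual content is the pair of implications $\text{HNC} \Rightarrow \text{PSC}$ (Proposition \ref{HNCPNC}) and $\text{PSC} \Rightarrow \text{HNC}$ (Proposition \ref{PSCHNC}), plus a compact variant. So there is no proof in the paper to measure your attempt against; your proposal has to stand on its own as a proof of HNC, and it does not.

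The gap is the one you yourself flag: the finite-level exactness $C=Q_k$. Everything before it is routine --- each $Q_k$ is a spectrahedral shadow by construction, the relaxations are nested, and $\bigcap_k Q_k = C$ holds (for compact $C$, under an Archimedean assumption on the description). But the exactness step is not a technical detail to be discharged by ``leaning hard on convexity''; it is the entire conjecture in disguise, and the known exactness results all require hypotheses that a general convex semialgebraic set does not satisfy: Lasserre's exact representations need SOS-convexity-type conditions on the $g_j$, and the sufficient conditions of \cite{HeltSuff} need strict quasi-concavity or positive curvature of the boundary. The uniform bounded-degree Positivstellensatz for linear forms that your duality argument requires fails in general --- truncated quadratic modules need not be closed or stable, and the hierarchy can converge only asymptotically --- and this failure is not circumventable: subsequent to this paper, Scheiderer constructed convex semialgebraic sets (for instance, cones of positive semidefinite forms in sufficiently many variables) that are provably \emph{not} spectrahedral shadows, so HNC is false and no completion of your program can exist. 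In short, your diagnosis of where the difficulty concentrates is accurate, but what sits there is not a hard open step in a proof; it is a counterexample. The honest version of your write-up is a reduction of HNC to a uniform degree bound, which is comparable in spirit to what this paper actually does (reduce HNC to PSC), not a proof of the statement.
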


\section{Results}

The following is not too hard to prove, and similar claims with essentially the same proof appear elsewhere. But it is important to establish that we are not presenting a strictly stronger claim whose falsehood would have no bearing on HNC. 

\begin{prop}
$\text{HNC} \Rightarrow \text{PSC}$
\label{HNCPNC}
\end{prop}

\begin{proof}
Suppose that HNC is true. That is, suppose that every convex semialgebraic set is a spectrahedral shadow. Let $P$ be a pseudospectrahedron in $\s_N^+$ defined by equations $\langle A_i, X \rangle = a_i$, for $i$ satisfying $1\leq i \leq k$. Then $P$ is the convex hull of a semialgebraic set, and from this it follows that $P$ is itself a semialgebraic set; combining the ``Projection Theorem" found in \cite{PRSTL} with the argument below completes the proof.

In general, given a set $S \subset \R^n$, Carath\'eodory's theorem lets us write the convex hull of $S$ as the projection of a set in higher dimension. To see how, first write
\[S' = \{(x,y_1,\ldots,y_{n+1},t) \mid x \in \R^n,y_i \in S, t \in \Delta^n, x= \sum t_iy_i \}, \]
where $\Delta^n$ is the standard simplex $\{t \in \R^{n+1}\mid t_i \geq 0, \sum t_i = 1\}$. Then, letting $\Pi_n$ denote the projection onto the first $n$  coordinates of the space $\R^{n+n(n+1)+(n+1)}$ in which $S'$ is situated, we have
\[\conv S        = \Pi_n(S').\]

The point of this general observation is that when $S$ is semialgebraic, then so is $S'$. Therefore this argument shows that the convex hull of a semialgebraic set is a projection of a semialgebraic set. Therefore the arbitrary $P$ named above is a projection of a semialgebraic set. Therefore $P$ is a convex semialgebraic set. Assuming HNC, we conclude that any such $P$ must be a spectrahedral shadow.
\end{proof}

The converse requires a technical lemma. We will state it as simply as possible and then give a full interpretation as we prove it.

\begin{lem}
Any system of polynomial equations and ineqaulities is equivalent to a system of quadratic equations and inequalities.
\label{ReduceQuadratic}
\end{lem}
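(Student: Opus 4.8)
The plan is to prove the lemma by the standard \emph{lifting} (or \emph{linearization}) technique familiar from quadratically constrained quadratic programming: introduce one fresh variable for each higher-degree monomial appearing in the system, and enforce the relation ``this variable equals the corresponding monomial'' using only quadratic equations. First I would pin down the meaning of \emph{equivalent}, since this is the one point that genuinely needs care. The new system will live in a higher-dimensional space, and the notion I want is that the solution set of the original system is exactly the image, under the coordinate projection forgetting the new variables, of the solution set of the new system, with that projection restricting to a bijection. This is the right notion for what follows, because passing to a spectrahedral shadow is itself an application of a linear projection, so these projections compose cleanly.

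Concretely, suppose the original system involves variables $x_1,\dots,x_n$ and let $d$ be the largest degree of any polynomial occurring in it. For each monomial $x^\alpha$ with $2 \le |\alpha| \le d$ that is needed, I introduce a fresh variable $y_\alpha$, and I identify the degree-one monomials with the originals by setting $y_{e_i} = x_i$. The key step is to build the new variables up by degree: given $\alpha$ with $|\alpha| \ge 2$, write $\alpha = \beta + \gamma$ with $1 \le |\beta|,|\gamma| < |\alpha|$ (possible precisely because $|\alpha| \ge 2$), and impose the single equation $y_\alpha - y_\beta y_\gamma = 0$. Since $y_\beta$ and $y_\gamma$ are single variables in the combined variable set, this is a genuine quadratic equation. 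Proceeding by induction on degree, every new variable is defined by one quadratic equation in terms of two variables of strictly smaller degree. Once all such variables are present, every polynomial in the original system becomes a degree-$\le 2$ (indeed linear) expression in the combined variables upon replacing each monomial $x^\alpha$ by $y_\alpha$; these linearized equations and inequalities replace the originals.

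It then remains to check the two inclusions establishing the projection equivalence. In one direction, if $(x,y)$ satisfies the new system, the chain of defining equations forces $y_\alpha = x^\alpha$ for every $\alpha$ by induction on $|\alpha|$, so each linearized constraint collapses back to its original and $x$ solves the original system. Conversely, given a solution $x$ of the original system, setting $y_\alpha := x^\alpha$ satisfies every defining equation $y_\alpha = y_\beta y_\gamma$ (these are just the monomial identities $x^{\beta+\gamma} = x^\beta x^\gamma$) and every linearized constraint, producing a point $(x,y)$ of the new system projecting to $x$. Hence the original solution set is exactly the projection of the new one.

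The degree-reduction bookkeeping is routine, so the main obstacle is not computational but definitional: I expect the delicate point to be stating and using the correct notion of equivalence, namely that the original solution set is the literal coordinate projection of the new solution set rather than merely being logically inter-derivable with it. Getting this right — with the lifted coordinates $y_\alpha$ functionally determined by the $x_i$ so that the projection is bijective onto the original solution set — is exactly what makes the lemma usable in the subsequent argument for $\text{PSC} \Rightarrow \text{HNC}$.
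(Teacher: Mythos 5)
Your proposal is correct and follows essentially the same approach as the paper: introduce a fresh variable for each higher-degree monomial, enforce it via quadratic relations built up inductively by degree, and take ``equivalent'' to mean the original feasible region equals the coordinate projection of the lifted one, proved by the same two inclusions. The only (inessential) differences are that you impose a single defining equation $y_\alpha - y_\beta y_\gamma = 0$ per new variable and fully linearize the original constraints, whereas the paper records all decompositions $u_a = x_b v$ with $x_b$ a single original variable and replaces high-degree monomials by quadratic terms $x_b u_c$.
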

This is a well-known fact and appears in several different forms. We will use only the version above.
\begin{proof}
Let $f_1,\ldots,f_k \in \R[x_1,\ldots,x_n]$, where $\R[x_1,\ldots,x_n]$ is the space of polynomials over $\R$ in the indeterminates $x_1,\ldots,x_n$.  We will introduce new indeterminates and equations to the system $F(x)=\wedge_i (f_i(x)\boxdot_i 0)$,where $\boxdot_i \in \{=,<,>,\geq,\leq\}$, in such a way that the projection of the resulting system's feasible region onto the original $n$ coordinates is exactly the same as the feasible region $R(F)=\{x \in \R^n \mid F(x)\}$ of $F$. We will also ensure that this new augmented system, denoted by $\tilde{F}$, has no expressions of degree more than $2$. To start, we identify each monomial of degree at least $2$ with a new indeterminate, and append to $F$ a certain set of the equations these new indeterminates must satisfy. Let $u_1,\ldots, u_M$ be these new indeterminates.

For each $a \in \{1,\ldots,M\}$  we form the set $E_a$ consisting of exactly the quadratic polynomials $u_a - x_bv \in \R[x_1,\ldots,x_n,u_1,\ldots,u_M]$ for which $x_b$ divides $u_a$ and $v$ is the unique element of $\{x_1,\ldots,x_n,u_1,\ldots,u_M\}$ satisfying $u_a = x_bv$. Note that the size of $E_a$ will be exactly the number of indeterminates $x_b$ which divide $u_a$, and that $E_a\cap E_b = \emptyset$ exactly when $a\neq b$. Taking the union of the $E_a$ over all $a \in \{1,\ldots,M\}$, setting each element equal to $0$, and appending these equations to the original system, we nearly have $\tilde{F}$; to complete it, we replace all monomials of degree $3$ or more appearing in the $f_i$ with quadratic terms $x_bu_c$. There is room for choice in doing so, but no inconsistency can arise from making different choices for the same monomial in different $f_i$.  This is $\tilde{F}$, and it contains only quadratic polynomials; we write it explicitly as
\[\tilde{F}(x,u) =   (\wedge_i (\tilde{f_i}(x,u)\boxdot_i 0)) \wedge (\wedge_j(q_j(x,u) = 0)),           \]
where $\tilde{f_i}(x,u) \in \R[x_1,\ldots,x_n,u_1,\ldots,u_M]$ is a polynomial obtained from $f_i(x)$ in the manner just described, and $q_j \in \R[x_1,\ldots,x_n,u_1,\ldots,u_M]$ are the elements of the sets $E_1, \ldots, E_M$. The feasible region of $\tilde{F}$ is denoted by $R(\tilde{F}) = \{(x,u) \in \R^{n+M} \mid \tilde{F}(x,u)\}$

Let $\Pi_n$ be as above, and suppose that $p\in \R^n$ is such that $F(p)$. By construction of the sets $E_a$, the point $(p,u_1(p),\ldots,u_M(p)) \in \R^{n+M}$ must satisfy $\tilde{F}$; that the $u_a$ can be uniquely expressed as monomial functions of $x = (x_1,\ldots,x_n)$ can be established by a routine induction on the degree of $u_a$.  Since $p = \Pi_n(p,u_1(p),\ldots,u_M(p))$, we have shown 
\[R(F) \subseteq \Pi_n(R(\tilde{F}))\]
Conversely, suppose that $(p,w) \in \R^{n+M}$ is such that $\tilde{F}(p,w)$. Then we may use the identities $u_a - x_bu_c = 0$ to eliminate all $u_a$, for $a = 1,\ldots,M$, from the polynomials $\tilde{f_j}(x,u)$. This recovers the original $f_i$ and yields the proposition $F(p)$. Thus we have shown
\[R(F) \supseteq \Pi_n(R(\tilde{F})),\]
and therefore
\[R(F) = \Pi_n(R(\tilde{F})).\]

\end{proof}

We will use the above result to show how any semialgebraic set can be realized as the projection of a set defined by quadratic equations and inequalities, or a finite union thereof. From there we can show that every convex semialgebraic set is convex hull of projections of finitely many pseudospectrahedra. If every pseudospectrahedron is in turn the projection of a spectrahedron (PSC), then we can conclude that HNC is true. Thus it follows that PSC implies HNC. Before we proceed to the proof of \ref{PSCHNC} we cite another technical lemma due to Netzer and Sinn:

\begin{lem}The convex hull of a union of finitely many spectrahedral shadows is itself a spectrahedral shadow.
\label{NetzerS}
\end{lem}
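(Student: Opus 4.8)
The plan is to prove this by the standard homogenization (or ``perspective'') construction from disjunctive convex programming, adapted to the semidefinite setting. Let $S_1,\ldots,S_m \subseteq \R^n$ be spectrahedral shadows. By definition each is the linear image of a spectrahedron, so there exist symmetric matrices $A_i^0,A_i^1,\ldots,A_i^n$ and $B_i^1,\ldots,B_i^{d_i}$ with
\[
S_i = \Bigl\{\, x \in \R^n \;:\; \exists\, y \in \R^{d_i},\ A_i^0 + \sum_{k=1}^n x_k A_i^k + \sum_{l=1}^{d_i} y_l B_i^l \succeq 0 \,\Bigr\}.
\]
Because each $S_i$ is convex, the convex hull of the union is exactly the set of all $\sum_i t_i z^{(i)}$ with $z^{(i)} \in S_i$ and $(t_1,\ldots,t_m)$ ranging over the simplex $t_i \geq 0$, $\sum_i t_i = 1$. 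The idea is to represent each scaled summand $t_i S_i$ by a single homogenized linear matrix inequality and then glue these together.

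First I would introduce, for each $i$, a scalar $t_i$ and lifted variables $x^{(i)} \in \R^n$, $y^{(i)} \in \R^{d_i}$, and impose the homogenized pencil
\[
t_i A_i^0 + \sum_{k=1}^n x^{(i)}_k A_i^k + \sum_{l=1}^{d_i} y^{(i)}_l B_i^l \succeq 0 .
\]
For $t_i > 0$ this is equivalent, after dividing by $t_i$, to $x^{(i)}/t_i \in S_i$, i.e. $x^{(i)} \in t_i S_i$. Stacking these $m$ pencils block-diagonally, together with the $1\times 1$ blocks enforcing $t_i \geq 0$, yields a single linear matrix inequality whose feasible set (in the variables $x$, $t$, $x^{(i)}$, $y^{(i)}$) is a spectrahedron. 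Adjoining the affine constraints $\sum_i t_i = 1$ and $x = \sum_i x^{(i)}$ intersects this spectrahedron with an affine subspace, and projecting onto the $x$-coordinates produces a spectrahedral shadow $C$. For tuples with every $t_i > 0$, the observation above shows that $C$ contains exactly the convex combinations $\sum_i t_i z^{(i)}$, $z^{(i)} \in S_i$, that is, all of $\conv\bigl(\bigcup_i S_i\bigr)$.

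The main obstacle is the behaviour of the lift on the boundary stratum where some $t_i = 0$. There the $i$-th pencil degenerates to $\sum_k x^{(i)}_k A_i^k + \sum_l y^{(i)}_l B_i^l \succeq 0$, whose $x^{(i)}$-projection is the recession cone of $S_i$ rather than $\{0\}$. A priori, then, $C$ equals $\conv\bigl(\bigcup_i S_i\bigr)$ enlarged by recession directions, and I expect the delicate part to be verifying that no spurious points are thereby introduced. This is clean when the $S_i$ are bounded: each recession cone is trivial, so $C = \conv\bigl(\bigcup_i S_i\bigr)$ exactly and the representing spectrahedron is itself compact (the case flagged in the abstract). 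In general one argues that each such direction lies in the recession cone of $\conv\bigl(\bigcup_i S_i\bigr)$ and is therefore absorbed, since $K + \operatorname{rec}(K) = K$ for closed convex $K$; this gives $C = \overline{\conv}\bigl(\bigcup_i S_i\bigr)$, and a separate argument that the projection is exact (rather than only exact up to closure) finishes the proof. Throughout, the conclusion follows because the class of spectrahedral shadows is stable under the three operations used here: block-diagonal juxtaposition of linear matrix inequalities, intersection with affine subspaces, and linear projection.
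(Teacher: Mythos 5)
The paper itself gives no proof of this lemma---it simply defers to the cited note of Netzer and Sinn---so your attempt has to be judged against that argument. Your homogenization construction is the standard starting point, and your treatment of the bounded case is essentially correct (granting each $S_i$ nonempty, so that the degenerate pencil at $t_i=0$ is the recession cone of the lift, which projects to $\{0\}$ when $S_i$ is bounded). But in the general case there is a genuine gap, sitting exactly where you flagged it. Your proposed repair---that spurious recession directions are ``absorbed'' because $K + \operatorname{rec}(K) = K$ for closed convex $K$---fails because $\conv\bigl(\bigcup_i S_i\bigr)$ need not be closed, and in fact no amount of analysis can rescue the naive lift, because its projection really can be strictly larger than the convex hull. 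Concretely, take $S_1 = \{(x,y) : x \ge 0,\ y \ge 0,\ xy \ge 1\}$ (a spectrahedron) and $S_2 = \{(0,0)\}$. Then
\[
\conv(S_1 \cup S_2) = \{(0,0)\} \cup (0,\infty)^2,
\]
whereas your lifted set, on the stratum $t_1 = 0$, $t_2 = 1$, contributes $S_2 + \operatorname{rec}(S_1) = [0,\infty)^2$; the point $(1,0)$ lies in the projection $C$ but not in the convex hull. So $C$ is only squeezed between the hull and its closure, and the ``separate argument that the projection is exact'' that your last sentence calls for does not exist for this construction.

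The fix---and this is precisely the content that makes the Netzer--Sinn note nontrivial---is to change the construction rather than sharpen its analysis: one must force $x^{(i)} = 0$ whenever $t_i = 0$. The key observation is that the convex set $G = \{(x,t) \in \R^n \times \R : t > 0\} \cup \{(0,0)\}$ is itself a spectrahedral shadow: it is the projection onto $(x,t)$ of the spectrahedron defined by
\[
\begin{pmatrix} t & x_j \\ x_j & a_j \end{pmatrix} \succeq 0, \qquad j = 1,\ldots,n,
\]
since for $t > 0$ every $x$ is feasible (take each $a_j$ large), while $t = 0$ forces $x = 0$. Intersecting your $i$-th homogenized pencil with a copy of $G$ (intersections of shadows are shadows) replaces the unwanted recession cone at $t_i = 0$ by $\{0\}$, after which your gluing step goes through verbatim and yields exactly $\conv\bigl(\bigcup_i S_i\bigr)$, with no closure issues and no nonemptiness caveats. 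Note that in the example above this corrected lift exhibits $\{(0,0)\} \cup (0,\infty)^2$ as a shadow, so the lemma itself survives; it is only your route to it that breaks. One smaller overclaim: even when all $S_i$ are bounded, the representing spectrahedron you build need not be compact, since the lifting variables $y^{(i)}$ can be unbounded; compactness of the representation (which the paper wants in its Proposition 2.5) requires an additional argument.
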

See the brief article \cite{NetzerSinn} for a proof.

\begin{prop}
$\text{HNC} \Leftarrow \text{PSC}$
\label{PSCHNC}
\end{prop}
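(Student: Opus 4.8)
The plan is to deduce HNC from PSC by writing an arbitrary convex semialgebraic set as the convex hull of a finite union of spectrahedral shadows and then invoking Lemma \ref{NetzerS}. Let $C \subseteq \R^n$ be convex and semialgebraic. Being semialgebraic, $C$ is a finite union $C = \bigcup_{i=1}^{r} B_i$ of basic semialgebraic sets, each $B_i$ cut out by finitely many polynomial equalities and (strict or non-strict) inequalities. Since $C$ is convex we have $C = \conv(C) = \conv\!\left(\bigcup_i B_i\right) = \conv\!\left(\bigcup_i \conv(B_i)\right)$, so it suffices to show that each $\conv(B_i)$ is a spectrahedral shadow and then apply Lemma \ref{NetzerS}.

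First I would eliminate every inequality in favor of an equality using slack variables: a constraint $g(x) \geq 0$ is replaced by $g(x) - s^2 = 0$, and a strict constraint $h(x) > 0$ by $h(x)t^2 = 1$, so that feasibility of the new variable forces $h(x) = 1/t^2 > 0$. The feasible set of the resulting pure-equality system projects onto the $x$-coordinates exactly onto $B_i$. Applying Lemma \ref{ReduceQuadratic} to this equality system then exhibits $B_i$ as the image under a linear projection $\Pi_i$ of a set $D_i = \{\, y \in \R^{m_i} : q_{ij}(y) = 0 \text{ for all } j \,\}$ defined solely by quadratic equalities.

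The key geometric step is to realize $D_i$, up to projection, as the rank-1 locus of a spectrahedron by homogenization. Lifting $y$ to $\tilde{y} = (1,y) \in \R^{m_i+1}$ and forming $Y = \tilde{y}\,\tilde{y}^{T}$, each quadratic $q_{ij}(y)$ equals $\langle A_{ij}, Y \rangle$ for a suitable symmetric matrix $A_{ij}$, while normalization is enforced by the affine constraint $Y_{00} = 1$. Let $V_i$ be the affine subspace defined by $Y_{00}=1$ together with the equations $\langle A_{ij}, Y \rangle = 0$, and set $S_i = \s_{m_i+1}^{+} \cap V_i$, a spectrahedron. Since $Y_{00}=1$ forces any rank-1 element $vv^{T} \in S_i$ to satisfy $Y = \tilde{y}\,\tilde{y}^{T}$ for a unique $y \in \R^{m_i}$, the rank-1 elements of $S_i$ are precisely the matrices $\tilde{y}\,\tilde{y}^{T}$ with $y \in D_i$, and the coordinate projection $\pi_i$ recovering $y_k = Y_{0k}$ maps them onto $D_i$. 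Writing $P_i$ for the pseudospectrahedron of $S_i$ and using that linear maps commute with the convex hull operation, we obtain $\conv(D_i) = \pi_i(P_i)$ and hence $\conv(B_i) = \Pi_i(\conv(D_i)) = (\Pi_i \circ \pi_i)(P_i)$, a linear image of a pseudospectrahedron.

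To finish, assume PSC. Then each $P_i$ is a spectrahedral shadow, so each $\conv(B_i)$, being its linear image, is again a spectrahedral shadow; Lemma \ref{NetzerS} then shows that $C = \conv\!\left(\bigcup_i \conv(B_i)\right)$ is a spectrahedral shadow, which is exactly HNC. I expect the main obstacle to lie in the bookkeeping around the inequalities: one must verify that the slack substitutions, especially $h(x)t^2 = 1$ for the strict case, faithfully reproduce $B_i$ after projection, and confirm that the homogenization introduces no spurious rank-1 elements beyond those arising from genuine points of $D_i$. The remaining ingredients, namely commuting projections with convex hulls, composing linear maps, and the final appeal to Netzer--Sinn, are routine.
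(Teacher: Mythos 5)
Your proposal is correct and follows essentially the same route as the paper: decompose into basic pieces and finish with Netzer--Sinn (Lemma \ref{NetzerS}), convert inequalities to pure equalities with slack variables, reduce to quadratics via Lemma \ref{ReduceQuadratic}, realize the quadratic-equality set as the projected rank-one locus of a spectrahedron by homogenization, and then apply PSC together with the fact that linear maps commute with convex hulls. The only differences are bookkeeping: the paper performs the quadratic reduction before introducing slacks and encodes strict positivity with three variables ($f_a(1,x)=y_a$, $y_a = r_a^2$, $y_a z_a = 1$) where you use the more economical $h(x)t^2 = 1$.
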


\begin{proof}
Assume PSC. Let $S \subseteq \R^n$ be a convex semialgebraic set. Since $S$ is semialgebraic, it can be realized as the finite union of sets defined by conjuctions of finitely many polynomial equations and inequalities. Since $S$ is convex, it is equal to its own convex hull. Since the convex hull of any set must contain the convex hull of any of its subsets, \ref{NetzerS} lets us assume that $S$ is determined by a conjunction of finitely many polynomial equations and inequalities. In this step, however, we lose the assumption of convexity; we now have the task of showing that the convex hull of any such $S$ is a spectrahedral shadow. By \ref{ReduceQuadratic}, we may assume that $S$ is defined by polynomials of degree $2$ or less. With these assumptions, we now show how to express $\conv( S)$ as the projection of a pseudospectrahedron.

Suppose that $S$ is the set of all $x=(x_1,\ldots,x_n) \in \R^n$ satisfying $f_a(1,x) > 0$ and $q_a(1,x) = 0$ for all $a \in \{ 1,\ldots,k\}$ (we may assume there are as many equations as inequalities by including trivial expressions if necessary), where $f_a,q_a \in \R[x_0,x_1,\ldots,x_n]$ are homogeneous polynomials given by
\[f_a(x_0,\ldots,x_n) =\sum_{i=0}^{n} \sum_{j\leq i} f_{a,i,j} x_ix_j, \text{ and }q_a(x_0,\ldots,x_n) =\sum_{i=0}^{n} \sum_{j\leq i} q_{a,i,j} x_ix_j. \]
Then $S$ is the projection in $\R^n$ of the set of all points $(x,y,z,r) \in \R^n\times \R^k \times \R^k \times \R^k$ satisfying the equations
\[ q_a(1,x) =0, f_a(1,x)=y_a, y_a- r_a^2=0, \text{ and } y_az_a=1 \text{ for all } a \in \{1,\ldots,k\}.\]
Note that we have expressed $S$ as the projection of a set described by only \emph{equations} in polynomials of degree at most $2$. This is a crucial step because inequalites cannot be encoded directly as the affine constraints of a spectrahedron. Note also that the possible values for $y_a\in \R$ are exactly the positive real numbers.

Let $N=3k+n+1$. We now define the matrices which enable us to translate our polynomial constraints into affine constraints on the cone $\s_N^+$. For each subset $\alpha=\{\alpha_1,\alpha_2\} \subseteq \{0,\ldots,n\}$ of size $2$, define the matrix $A_\alpha \in \s_N$ by

\[ (A_\alpha)_{i,j} =
 \begin{cases}
1\slash 2, & \text{if }\{i-1,j-1\}=\alpha \\
0, & \text{otherwise }
\end{cases}.\]
For every $b \in \{0,\ldots,n\}$, define
\[ (A_b)_{i,j} =
 \begin{cases}
1, & \text{if } i = j = b+1 \\
0, & \text{otherwise }
\end{cases}.\]
For every $a \in \{1,\ldots,k\}$ define the matrix $Y_a$ by
\[ (Y_a)_{i,j} =
 \begin{cases}
1\slash 2, & \text{if } \{i , j\} =\{0, n+a+1\} \\
0, & \text{otherwise }
\end{cases},\]
the matrix $g_a$ by
\[ (g_a)_{i,j} =
 \begin{cases}
1, & \text{if } i = j =n+2k+a+1  \\
0, & \text{otherwise }
\end{cases},\]
and the matrix $YZ_a$ by
\[ (YZ_a)_{i,j} =
 \begin{cases}
1\slash 2, & \text{if } \{i,j\}=\{n+a+1,n+k+a+1\}  \\
0, & \text{otherwise }
\end{cases}.\]
For each $a \in \{1,\ldots,k\}$, we define the matrix $F_a$ by
\[F_a =\sum_i \sum_{j\leq i} f_{a,i,j} A_{\{i,j\}}-Y_a,\]
the matrix $Q_a$ by 
\[Q_a = \sum_i \sum_{j\leq i} q_{a,i,j} A_{\{i,j\}}, \]
and the matrix $G_a$ by
\[G_a = Y_a - g_a.\]

Finally, we are ready to define the pseudospectrahedron whose projection in $\R^n$ is equal to $\conv(S)$:

Let $P$ denote the set of all $X \in \s_N^+$ of rank $1$ satisfying, for all $a \in \{1,\ldots,k\}$, the equations $\langle X, F_a\rangle=\langle X,Q_a\rangle =\langle X,G_a\rangle=0$, and $\langle X,A_0\rangle = \langle X, YZ_a \rangle = 1$. 

Then $S' \equiv \conv(P)$ is the desired pseudospectrahedron. To prove this, we will show that $S = \Pi_n(P)$, where

 \[\Pi_n : X \mapsto (\langle X, A_{\{0,1\}}\rangle,\ldots,\langle X, A_{\{0,n\}}\rangle).\] 

Since $\Pi_n$ is linear, the equality is preserved under taking convex hulls so that we will obtain $\conv(S) = \Pi_n(\conv(P))= \Pi_n(S')$.

The elements $X \in \s_N^+$ of rank $1$ are exactly the matrices $v^Tv$ for any $v$ of the form

\[(v_1,\ldots,v_N).\]

If $X = v^Tv$ is such an element, then the constraint $\langle X, A_0 \rangle = 1$ is satisfied if and only if $v_1^2 = 1$. Assuming this, we find the point $(v_1^2,v_1v_2,\ldots,v_1v_{n+1})$ satisfies the relations defining $S$ if and only if $\langle X, F_a\rangle=\langle X,Q_a\rangle =\langle X,G_a\rangle=0$, and $\langle X,A_0\rangle = \langle X, YZ_a \rangle = 1$ are satisfied for all $a \in \{1,\ldots,k\}$. In particular, 
\[ \langle X,F_a \rangle=0 \Rightarrow f_a(1,v_1v_2,\ldots,v_1v_{n+1}) >0,\]
and
\[\langle X,Q_a\rangle=0 \Rightarrow q_a(1,v_1v_2,\ldots,v_1v_{n+1}) =0\]
for all $a \in \{1,\ldots,k\}$.
This shows that $\Pi_n(P) \subseteq S$.

Conversely, suppose that $x=(x_1,\ldots,x_n)$ satisfies $f_a(1,x)>0$ and $q_a(1,x)=0$ for all $a \in \{1,\ldots,k\}$. Set $v_1 = 1$ and $v_j = x_{j-1}$ for $j \in \{2,\ldots,n+1\}$. For $j \in \{n+2,\ldots,n+k+1\}$, set $v_j =f_{j-(n+1)}(1,x)$, $v_{j+k} = 1\slash v_j$, and $v_{j+2k} = \sqrt{v_j}$. If $v = (v_1,\ldots,v_N)$, then $X=v^Tv$ belongs to $P$ and $\Pi_n(X) = x$. This shows that $S \subseteq \Pi_n(P)$, and therefore $S= \Pi_n(P)$.

As discussed above, it follows that $\conv(S) = \Pi_n(S')$. Assuming PSC, we note that $S'$ is a projection of a spectrahedron, and therefore any projection of $S'$ is also a spectrahedral shadow. Thus HNC follows from PSC.
\end{proof}
We now modify the above argument to show that if PSC is true for all compact spectrahedra, then HNC is true for all compact convex semialgebraic sets.

\begin{prop}{\bf(\emph {Compact PSC $\Leftrightarrow$ Compact HNC})}
All compact convex semialgebraic sets are spectrahedral shadows if and only if all compact pseudospectrahedra are spectrahedral shadows.
\label{compactPSC}
\end{prop}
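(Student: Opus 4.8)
The plan is to re-run both implications of the non-compact equivalence (Propositions \ref{HNCPNC} and \ref{PSCHNC}) while tracking boundedness at every step, so that each spectrahedron and pseudospectrahedron produced is compact. The forward direction ($\Leftarrow$) is where all the work lies; the reverse direction ($\Rightarrow$) is a near-verbatim replay of Proposition \ref{HNCPNC} together with the hypothesis of compactness.

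For the reverse direction, let $P$ be a compact pseudospectrahedron, so $P = \conv(R)$ where $R$ is the set of rank-$1$ elements of some spectrahedron. The set $R$ is semialgebraic (rank $\leq 1$ is a semialgebraic condition intersected with a spectrahedron), and the Carath\'eodory projection argument given in the proof of Proposition \ref{HNCPNC} shows that $\conv(R) = P$ is again semialgebraic. Since $P$ is convex by definition and compact by hypothesis, it is a compact convex semialgebraic set, and compact HNC applies directly to conclude that $P$ is a spectrahedral shadow.

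For the forward direction, I would begin with a compact convex semialgebraic set $T$ and first reduce to compact pieces. By the finiteness theorem for closed semialgebraic sets, $T$ is a finite union $\bigcup_i C_i$ of basic closed semialgebraic sets $C_i = \{x : g_{i,1}(x)\geq 0,\ldots,g_{i,m}(x)\geq 0\}$; each $C_i \subseteq T$ is bounded and closed, hence compact. Because $\conv(T) = \conv(\bigcup_i \conv(C_i))$, Lemma \ref{NetzerS} reduces the problem to exhibiting each $\conv(C_i)$ as a spectrahedral shadow. After applying Lemma \ref{ReduceQuadratic} to assume the $g_{i,j}$ are quadratic, I would run the construction of Proposition \ref{PSCHNC} on a single compact $C_i$, but with one essential modification described below, and then invoke compact PSC on the resulting compact pseudospectrahedron to make $\conv(C_i)$ a spectrahedral shadow; Lemma \ref{NetzerS} then reassembles $T = \conv(T)$ as one.

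The main obstacle is that the construction of Proposition \ref{PSCHNC} is never compact: it forces the \emph{strict} inequalities $f_a(1,x) > 0$ by adjoining the reciprocal variables $z_a = 1/y_a$ through the constraints $\langle X, YZ_a\rangle = 1$, and these $z_a$ blow up as $f_a(1,x)\to 0^+$. The key observation is that a basic closed $C_i$ requires only the \emph{non-strict} inequalities $f_a(1,x)\geq 0$, and these are already captured by the single constraint $y_a = r_a^2$; so I would simply delete the variables $z_a$ and the matrices $YZ_a$, $g_a$, $G_a$, working instead in $\s_N^+$ with $N = 2k+n+1$ and generating vector $v = (1,x_1,\ldots,x_n,y_1,\ldots,y_k,r_1,\ldots,r_k)$. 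On the compact $C_i$ every coordinate is now bounded: $x$ ranges over $C_i$, $y_a = f_a(1,x)$ is a continuous function on a compact set, and $r_a = \sqrt{y_a}$ is therefore bounded as well. Hence the rank-$1$ generator set $P$ is bounded; it is also closed, being the intersection of the closed set of rank-$\leq 1$ PSD matrices with the affine constraints, where $\langle X, A_0\rangle = 1$ forces the rank to be exactly one. Thus $P$ is compact, and so is the pseudospectrahedron $\conv(P)$. The feasibility and projection arguments of Proposition \ref{PSCHNC} then go through unchanged with the reciprocal constraints removed, yielding $C_i = \Pi_n(P)$ and $\conv(C_i) = \Pi_n(\conv(P))$, which completes the reduction. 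The delicate points to verify carefully will be the closedness claim for $P$ and the clean identification of $\conv(P)$ as a genuine pseudospectrahedron of the reduced spectrahedron $\s_N^+ \cap V$.
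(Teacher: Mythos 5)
Your argument is correct, but it secures compactness by a genuinely different device than the paper. Both proofs decompose the compact convex set into basic closed pieces (the paper cites Theorem 3.5 of \cite{HeltSuff}), reduce to quadratics, and encode each non-strict inequality $f_a(1,x)\geq 0$ by a square slack $y_a = r_a^2$ --- and note the paper, like you, drops the reciprocal variables and the matrices $YZ_a$, $g_a$, $G_a$ at this stage, precisely because the pieces are closed, so strictness never needs to be enforced. The divergence is in \emph{what} is made compact: you bound only the rank-one locus, observing that on the compact piece every lifted coordinate is a continuous function of $x$, so the pseudospectrahedron $\conv(P)$ is compact even though the ambient spectrahedron remains unbounded; the paper instead adjoins one more slack variable $w$ and imposes the sphere constraint $\|(x_0,x,y,z,w)\|^2 = B$ for large $B$, which pins the trace of every feasible matrix to $B$ and therefore makes the \emph{entire ambient spectrahedron} compact. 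Your version suffices for the proposition as literally stated, since its hypothesis applies to any pseudospectrahedron that happens to be compact; the paper's version establishes something slightly stronger, namely that the formally weaker hypothesis ``PSC holds for pseudospectrahedra of compact spectrahedra'' already implies compact HNC --- which is the reading suggested by the paper's own prose (``if PSC is true for all compact spectrahedra''). If you want that stronger statement, you can graft the trace-normalization trick onto your construction at no cost. Finally, you write out the reverse direction explicitly (a compact pseudospectrahedron is a compact convex semialgebraic set, via the Carath\'eodory/projection argument of Proposition \ref{HNCPNC}), whereas the paper leaves that direction implicit; this is a point in your proof's favor.
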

\begin{proof}
Let $C$ be a compact convex semialgebraic set in $\R^n$. By Theorem $3.5$ in \cite{HeltSuff}, $C$ can be realized as a union of finitely many sets of the form

\[ \{x \in \R^n \mid f_i(x) \geq 0, i = 1,\ldots,m\},\]

where each $f_i$ is a polynomial and $m \in \N$. Fix $S = \{x \in \R^n \mid f_i(x) \geq 0, i = 1,\ldots,m\}$.

By introducing new indeterminates and quadratic equations as above, we replace each inequality $f_i(x) \geq 0$ with an equation $F_i(1,x,y) = z_i^2$, where the $F_i$ are quadratic and $x_0,y = y_1,\ldots,y_k,z = z_1,\ldots,z_m$ are the new indeterminates. Since each $f_i$ is a polynomial, these equations imply that the values of $z_i$ are bounded (by the compactness of $S$). We take $w$ to be a slack variable, and impose the constraint
\[\|(x_0,x,y,z,w)\|^2 = B,\] 

for an appropriately large number $B>1$ (since $x_0^2 = 1$) -- a value so large that this constraint leaves $S$ unchanged. This ensures that the spectrahedron associated to $S$ lies in the affine plane of matrices having trace equal to $B$. 

If the pseudospectrahedron so obtained is a spectrahedral shadow, then $C$ is a spectrahedral shadow. Thus PSC for compact sets implies HNC for compact sets.
\end{proof}
Finally, we show that compact quadratic programming can be exactly and uniformly encoded as semidefinite programmming if and only if HNC holds for compact convex semialgebraic sets. 
\begin{cor}
Propositions $i$ and $ii$ below are equivalent.
\begin{list}{\roman{qcounter})}{\usecounter{qcounter}}
\item Compact convex semialgebraic sets are spectrahedral shadows. 
\item If $C\subset \R^n$ is compact and defined by finitely many quadratic equalities, then for some $N \in \N$ there is a compact spectrahedron $S\subseteq \s_N$ and a linear transformation $T: \s_N \to \s_n$ satisfying 
\[ \inf\{ x^TAx \mid x \in C \} = \inf \{\langle TX, A \rangle \mid X \in S\} \]
for all $A \in \s_ n$.
\end{list}
\end{cor}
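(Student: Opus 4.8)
The plan is to realize both propositions as restatements of the compact Pseudospectrahedron Conjecture, using the identity $x^T A x = \langle A, xx^T\rangle$ to linearize the quadratic objective. First I would fix a compact set $C \subset \R^n$ defined by quadratic equalities and set $K = \conv\{xx^T \mid x \in C\} \subseteq \s_n$. Because $\langle A, \cdot\rangle$ is linear, minimizing it over a set agrees with minimizing it over that set's convex hull, so
\[\inf\{x^T A x \mid x \in C\} = \inf\{\langle A, M\rangle \mid M \in K\}\]
for every $A \in \s_n$. Likewise, for a linear $T\colon \s_N \to \s_n$ and a spectrahedron $S$, we have $\inf\{\langle TX, A\rangle \mid X \in S\} = \inf\{\langle A, M\rangle \mid M \in T(S)\}$. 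Thus condition (ii) asserts exactly that $K$ and $T(S)$ have the same support function.

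The conceptual core is the observation that $K$ is a compact pseudospectrahedron. Homogenizing and lifting each quadratic equality to an affine constraint $\langle Q_a, X\rangle = 0$, exactly as in the proof of Proposition \ref{PSCHNC}, exhibits $\{xx^T \mid x \in C\}$ as the rank-one locus of a spectrahedron; compactness of $C$ bounds $\trace(xx^T) = \|x\|^2$, so this spectrahedron may be taken compact (by appending a trace constraint as in Proposition \ref{compactPSC}), and $K$ is its pseudospectrahedron. Conversely, every compact pseudospectrahedron arises this way: its defining affine constraints $\langle A_i, X\rangle = a_i$, read on rank-one elements $X = vv^T$, become quadratic equalities $v^T A_i v = a_i$ cutting out a compact quadratically constrained set $C$ with $K = \conv\{vv^T \mid v \in C\}$.

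With this dictionary, I would argue the equivalence as follows. For (i) $\Rightarrow$ (ii): proposition (i) is Compact HNC, which by Proposition \ref{compactPSC} is equivalent to Compact PSC, so the compact pseudospectrahedron $K$ is a spectrahedral shadow; realizing it as the image $K = T(S)$ of a compact spectrahedron $S$ under a linear $T$, the support functions of $K$ and $T(S)$ coincide trivially, giving (ii). For (ii) $\Rightarrow$ (i): given any compact pseudospectrahedron $K = \conv\{vv^T \mid v \in C\}$, apply (ii) to the associated $C$; since (ii) equates the infima of $\langle \cdot, A\rangle$ over $K$ and over $T(S)$ for all $A$ (and hence, replacing $A$ by $-A$, their support functions in every direction), the two compact convex sets are equal, so $K = T(S)$ is a spectrahedral shadow. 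This establishes Compact PSC, whence (i) follows by Proposition \ref{compactPSC}.

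The main obstacle I anticipate is the compactness bookkeeping rather than the convexity argument. Two points need care: showing that a bounded spectrahedral shadow can be presented as the image of a genuinely compact spectrahedron $S$, which is what lets me name the $S$ required in (ii) rather than merely an image of a possibly unbounded spectrahedron; and verifying that the standard principle that two sets with equal support functions are equal is legitimate here, which rests on $K$ and $T(S)$ both being closed, bounded, and convex. The support-function equivalence also cleanly supplies the uniformity in $A$ demanded by (ii), converting a family of scalar optimization identities into a single set equality.
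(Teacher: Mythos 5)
Your proposal is correct and takes essentially the same route as the paper's own proof: both directions are reduced to Compact PSC via Proposition \ref{compactPSC}, the identity $x^TAx=\langle A, xx^T\rangle$ linearizes the objective, and the set equality $K=T(S)$ is deduced from agreement of the infima over all $A\in\s_n$ (the paper invokes the bipolar theorem for compact convex sets where you invoke equality of support functions, which is the same principle). The one delicate point you flag --- that a compact spectrahedral shadow must actually be presented as the image of a \emph{compact} spectrahedron in order to name the $S$ demanded by (ii) --- is passed over just as quickly in the paper (it is asserted there in one line, attributed to Proposition \ref{compactPSC}), so your treatment is, if anything, more explicit about where the remaining care is needed.
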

\begin{proof}
If $i$ holds, then by \ref{compactPSC} we may express the convex hull of $\{xx^T \mid x \in  C\}$ as a spectrahedral shadow $T(S)$ where $S$ is compact, and $ii$ is immediate.

Conversely, suppose that $ii$ is given, and let $C$ be a compact pseudospectrahedron in $\s_n$. By \ref{compactPSC}, it is enough to prove that $C$ is a spectrahedral shadow. Let $S$ be a compact spectrahedron and $T$ a linear transformation such that
\[ \inf\{ \langle X, A\rangle \mid X \in C \} = \inf \{\langle TX, A \rangle \mid X \in S\} \]
for all $A \in \s_ n$. If necessary, we can translate both $C$ and $T(S)$ and intersect them with an affine plane to ensure that they meet the hypotheses of the ``Bipolar Theorem" as stated in \cite{Barvinok}; none of this will alter the above equality. By compactness, both of $C$ and $T(S)$ are closed. We may therefore conclude that $C =(C^\circ)^\circ=(T(S)^\circ)^\circ = T(S)$.
\end{proof}

\section{Examples}
We finish with two examples. The first illustrates what happens in the case of a single positive definite constraint -- the spectrahedron and its pseudospectrahedron coincide. The second shows that this is not always the case. 

\begin{examp}
Define the set
\[S \equiv \{X\in \s_3^+ \mid \trace X = 1\}. \]
Then $S$ is isomorphic to the set 
\[\{(x,y) \mid x(1-x)-y^2 \geq 0\},\]
which corresponds to the planar region bounded by the green curve in figure \ref{2d} below. Notice that every element of the boundary of $S$ has rank equal to one. The spectrahedron $S$ is therefore the convex hull of its rank-one elements.
\begin{figure}[H]

   \begin{center}
    \includegraphics[width=0.5\textwidth]{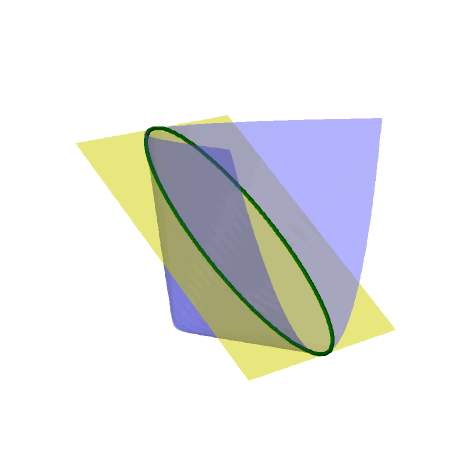}
    \caption{The green curve consists of exactly the rank-one points of this spectrahedron.}
    \label{2d}
  \end{center}
\end{figure}
This property is independent of the low dimension of our example; it is true in general that the PSD matrices with trace equal to one comprise a pseudospectrahedron.
\end{examp}

Below is a slightly less trivial case. This time, we find that the given pseudospectrahedron is in fact a polytope.

\begin{examp}
Define $S$ to be the spectrahedron 
\[\Bigg\{ (x,y,z) \mid \begin{pmatrix}x+1&z&y\\z&1-y&x\\y&x&z+1\end{pmatrix} \succeq 0 \Bigg\}.\]
Then $S$ is a compact spectrahedron. Solving for the rank-one points of $S$, we find exactly four. These four points are the vertices of a tetrahedron, and the line segments connecting one to another lie in the boundary of $S$. These points and segments are depicted in green and yellow, respectively, in figure \ref{3d}. This is not a surprise; for any spectrahedron $S \subseteq \s_n^{+}$ which meets $\s_n^{++}$, all convex combinations of $n-1$ or fewer rank-one elements of $S$ must lie on the relative boundary of $S$.
\begin{figure}[H]

   \begin{center}
    \includegraphics[width=0.5\textwidth]{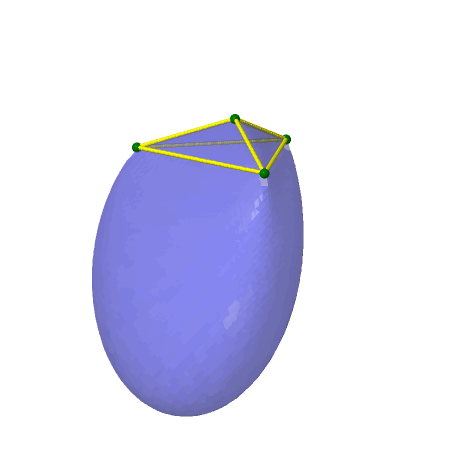}
    \caption{The four green points are exactly those of rank one. The yellow line segments lie on the boundary of the spectrahedron $S$ and their relative interior points all have rank two.}
    \label{3d}
  \end{center}
\end{figure}
\end{examp}
The graphics in the preceding examples were created using Sage (see \cite{SAGE}).
\bibliographystyle{plain}
\bibliography{./mainbib}

\end{document}